\newtheorem{ut}{Theorem}
\numberwithin{equation}{section}
\newtheorem{up}[ut]{Proposition}
\theoremstyle{definition}
\newtheorem{uq}{Question}
\author[D. S. Lipham]{David S. Lipham}
\subjclass[2010]{54F45, 54F50, 54G20} 
\keywords{complete Erd\H{o}s space, sigma-product, almost zero-dimensional}
\address{Department of Mathematics, College of Coastal Georgia, Brunswick, GA 31520, United States of America}
\email{dlipham@ccga.edu}
\title[The $\sigma$-product of  complete Erd\H{o}s space]{The $\bm{\sigma}$-product of  complete Erd\H{o}s space}
\begin{document}

\begin{abstract}We show that the $\sigma$-product of complete Erd\H{o}s space $\mathfrak E_{\mathrm{c}}$ is homeomorphic to the rational product $\mathbb Q\times \mathfrak E_{\mathrm{c}}$, answering a  question by Rodrigo Hern\'{a}ndez-Guti\'{e}rrez and Alfredo Zaragoza.
\end{abstract}

\maketitle

 \section{Introduction}
 
Given a topological space $X$ and a point $p\in X$, the \textbf{$\sigma$-product of $X$} is defined to be the following subspace of $X^\omega$: $$\sigma X=\{\mathbf x \in X^\omega:x_n=p\text{ for all but finitely many }n<\omega\}.$$ Assuming that $X$ is homogeneous (e.g.\ a topological group),  this definition is invariant of the choice of  $p$. Another way of writing $\sigma X$ is as the union of all finite powers of $X$ inside of $X^\omega$;
 $$\sigma X=\bigcup_{n<\omega} X^n\times \{p\}^{\omega\setminus (n+1)}.$$ 
 
Here we are interested in spaces $X$ for which $\sigma X$ is homeomorphic the rational product $\mathbb Q\times X$. Examples include  $\mathbb Q$,  the Cantor set $2^\omega$, and the space of irrational numbers $\mathbb P$ (see  \cite{van}). An example of positive dimension  is the \textbf{Erd\H{o}s space} $\mathfrak E$, defined as the set of all points in the Hilbert space $\ell^2$ whose coordinates are all rational. In fact, by the characterization of  Erd\H{o}s space in \cite{erd},  $\sigma \mathfrak E$ and $\mathbb Q\times \mathfrak E$ are each homeomorphic to  $\mathfrak E$ (cf.\ \cite[Corollary 9.4]{erd}).\footnote{In \cite{erd}, the $\sigma$-product is called the \textbf{weak product}.} 

Now consider the \textbf{complete Erd\H{o}s space}   \begin{align*}\textstyle\mathfrak E_{\mathrm{c}}&=\{\mathbf x\in \ell^2:x_n\in \{0\}\cup \{1/n:0<n<\omega\} \text{ for all }n<\omega\}\\
&=\textstyle\{\mathbf x\in \ell^2:x_n\in \{1,\frac{1}{2},\frac{1}{3},\ldots,0\}\text{ for all }n<\omega\}.\end{align*}  Like $\mathfrak E$ it is a $1$-dimensional topological group, and  it has the added quality of being  completely metrizable (note that $\textstyle\mathfrak E_{\mathrm{c}}$ is closed in $\ell^2$).   It is topologically equivalent to  the set of endpoints of the Lelek fan, and also to the set of all points in $\ell^2$ whose coordinates are all irrational \cite{31}. In   \cite[Question 6.1]{rz}, Hern\'{a}ndez-Guti\'{e}rrez and Zaragoza asked whether the $\sigma$-product of $\mathfrak E_{\mathrm{c}}$ is homeomorphic to $\mathbb Q\times \mathfrak E_{\mathrm{c}}$. Here we provide a positive answer  using their  characterization of the latter.
 
 \begin{ut}$\sigma \mathfrak E_{\mathrm{c}} \simeq \mathbb Q\times \mathfrak E_{\mathrm{c}}$.\end{ut}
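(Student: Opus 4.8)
The plan is to exhibit $\sigma\mathfrak{E}_{\mathrm{c}}$ as a member of $\capp(\mathfrak{E}_{\mathrm{c}})$ and then to recognize $\mathbb{Q}\times\mathfrak{E}_{\mathrm{c}}$ as the unique member of that class carrying the additional regularity that $\sigma\mathfrak{E}_{\mathrm{c}}$ visibly inherits from $\mathfrak{E}_{\mathrm{c}}^\omega$. For the first step, set $X_k=\{\mathbf y\in\mathfrak{E}_{\mathrm{c}}^\omega:y_n=\mathbf 0\text{ for all }n\ge k\}$, so that each $X_k$ is closed in $\mathfrak{E}_{\mathrm{c}}^\omega$ (hence in $\sigma\mathfrak{E}_{\mathrm{c}}$) and $\sigma\mathfrak{E}_{\mathrm{c}}=\bigcup_{k<\omega}X_k$. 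Since $X_k$ is naturally homeomorphic to the finite power $\mathfrak{E}_{\mathrm{c}}^k$, and finite powers of $\mathfrak{E}_{\mathrm{c}}$ are again copies of $\mathfrak{E}_{\mathrm{c}}$ (being nonempty, Polish, almost zero-dimensional and cohesive, they fall under the Dijkstra--van Mill characterization of complete Erd\H{o}s space), we have $X_k\approx\mathfrak{E}_{\mathrm{c}}$. Viewing $X_k$ as the slice $\{\mathbf y\in X_{k+1}:y_k=\mathbf 0\}$ of $X_{k+1}$, which under the homeomorphism $X_{k+1}\approx\mathfrak{E}_{\mathrm{c}}^k\times\mathfrak{E}_{\mathrm{c}}$ corresponds to $\mathfrak{E}_{\mathrm{c}}^k\times\{\mathbf 0\}$, and using that $\{\mathbf 0\}$ is nowhere dense in $\mathfrak{E}_{\mathrm{c}}$ (which has no isolated point), we see that $X_k$ is nowhere dense in $X_{k+1}$. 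Therefore $\sigma\mathfrak{E}_{\mathrm{c}}\in\capp(\mathfrak{E}_{\mathrm{c}})$.

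Next I would record the properties of $\sigma\mathfrak{E}_{\mathrm{c}}$ that go beyond membership in $\capp(\mathfrak{E}_{\mathrm{c}})$. Since $\mathfrak{E}_{\mathrm{c}}^\omega$ is almost zero-dimensional (indeed $\mathfrak{E}_{\mathrm{c}}^\omega\approx\mathfrak{E}_{\mathrm{c}}$), so is its subspace $\sigma\mathfrak{E}_{\mathrm{c}}$; and $\sigma\mathfrak{E}_{\mathrm{c}}$ is cohesive, which one checks with the same slicing device: a nonempty clopen subset of $\sigma\mathfrak{E}_{\mathrm{c}}$ lying in a small enough basic box $\prod_{i<n}V_i\times\prod_{i\ge n}\mathfrak{E}_{\mathrm{c}}$, each $V_i$ a cohesion witness in $\mathfrak{E}_{\mathrm{c}}$, would meet some single-coordinate slice (a copy of $\mathfrak{E}_{\mathrm{c}}$) in a nonempty clopen subset contained in a cohesion witness, which is impossible. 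For orientation it is also useful to note that every $X\in\capp(\mathfrak{E}_{\mathrm{c}})$ is automatically meager in itself and nowhere Polish, because in any witnessing decomposition $X=\bigcup_kX_k$ each $X_k$ has empty interior in $X$ (an open $U\subseteq X_k$ would be open in $X_{k+1}$, contradicting that $X_k$ is nowhere dense there).

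What remains is a characterization of $\mathbb{Q}\times\mathfrak{E}_{\mathrm{c}}$ in the spirit of the cited theorem $\capp(\mathbb P)\approx\{\mathbb Q\times\mathbb P\}$ of van Mill: that $\mathbb{Q}\times\mathfrak{E}_{\mathrm{c}}$ is the unique almost zero-dimensional and cohesive member of $\capp(\mathfrak{E}_{\mathrm{c}})$. Granting this, Theorem~2 follows at once from the two paragraphs above. Establishing (or correctly locating in the literature) that uniqueness statement is, I expect, the main obstacle: the natural approach is a back-and-forth / absorbing-set argument modeled on van Mill's, but run with the topological characterization of $\mathfrak{E}_{\mathrm{c}}$ and a suitable extension theorem for homeomorphisms between closed copies of $\mathfrak{E}_{\mathrm{c}}$ in place of the corresponding facts about $\mathbb P$, and with almost zero-dimensionality maintained as an invariant throughout the construction. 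This is precisely the point at which $\sigma\mathfrak{E}_{\mathrm{c}}$ and $\mathbb{Q}\times\mathfrak{E}_{\mathrm{c}}$ part company with the pathological member of $\capp(\mathfrak{E}_{\mathrm{c}})$ produced for Theorem~1, so the entire force of the result is concentrated there.
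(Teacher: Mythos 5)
Your first paragraph (that $\sigma\mathfrak E_{\mathrm{c}}\in\capp(\mathfrak E_{\mathrm{c}})$ via the slices $X_k\approx\mathfrak E_{\mathrm{c}}^k\approx\mathfrak E_{\mathrm{c}}$) is essentially fine, apart from $X_0$ being a singleton, so start at $k=1$. The problem is the reduction in your final paragraph: the characterization you propose to invoke --- that $\mathbb Q\times\mathfrak E_{\mathrm{c}}$ is the \emph{unique almost zero-dimensional, cohesive} member of $\capp(\mathfrak E_{\mathrm{c}})$ --- is not merely unproven, it is false, and the paper's own Theorem~1 refutes it. The counterexample $Y$ built there is a subspace of $E(\mathcal F)\simeq\mathfrak E_{\mathrm{c}}$, hence almost zero-dimensional; and since $Y\cup\{\infty\}$ is connected and every point of $Y$ has a neighborhood whose closure misses $\infty$, any nonempty clopen subset of $Y$ contained in such a neighborhood would be a proper nonempty clopen subset of the connected set $Y\cup\{\infty\}$ --- so $Y$ is cohesive as well. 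Yet $Y\in\capp(\mathfrak E_{\mathrm{c}})$ and $Y\not\simeq\mathbb Q\times\mathfrak E_{\mathrm{c}}$. So almost zero-dimensionality plus cohesion is not the ``additional regularity'' that separates $\mathbb Q\times\mathfrak E_{\mathrm{c}}$ from the pathology; the entire point of the paper is that the dividing line is more delicate.

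The characterization that actually exists (Proposition~3, quoted from Hern\'andez-Guti\'errez and Zaragoza) says $\mathbb Q\times\mathfrak E_{\mathrm{c}}$ is the unique member of the class $\sigma\mathcal E$, whose defining conditions involve a \emph{witness topology} $\mathcal W$: the pieces $E_n$ must be closed in $\mathcal W$, the space must be $\{E_n\}$-cohesive (a condition on clopen subsets of the $E_n$, not just of $E$), and --- crucially --- there must be a neighborhood basis $\beta$ with each $V\cap E_n$ compact in $\mathcal W$. The paper's proof consists of verifying exactly these conditions, and the choice of decomposition matters: it uses $E_n=\mathfrak E_{\mathrm{c}}\times K_n\times\cdots\times K_n\times\{\mathbf 0\}^\omega$ with $K_n$ a norm-bounded (hence $\mathcal W$-compact) ball, together with a basis of products of $\mathcal W$-compact neighborhoods. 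Your decomposition into full finite powers $\mathfrak E_{\mathrm{c}}^k\times\{\mathbf 0\}^\omega$ would fail the compactness condition (5), since $\mathfrak E_{\mathrm{c}}$ itself is not compact in the witness topology inherited from $\{1,\frac12,\frac13,\ldots,0\}^\omega$. So the missing step in your argument is not a routine back-and-forth to be filled in; it is the substantive content of the theorem, and it requires both the correct (stronger) characterization and a decomposition adapted to it.
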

 
We pose the following question in response. 

\begin{uq}Is $\{\mathbf x\in \mathfrak E_{\mathrm{c}}:x_n=0 \text{ for some } n<\omega\}$ homeomorphic to $\mathbb Q\times \mathfrak E_{\mathrm{c}}$?\end{uq}

\section{Properties of $\mathfrak E_{\mathrm{c}}$}

An intersection of clopen subsets of a space $X$ is called a \textbf{C-set} in $X$. A space $X$ with a  basis of C-set neighborhoods is  called  \textbf{almost zero-dimensional}.   A separable metrizable, zero-dimensional topology $\mathcal W$ on  $X$ \textbf{witnesses the almost zero-dimensionality of $X$}, provided that $\mathcal W$ is coarser than the original topology of $X$, and $X$ has a basis of neighborhoods which are closed in $\mathcal W$.  See \cite[Chapter 2]{erd}.

The  zero-dimensional topology  that  $\mathfrak E_{\mathrm{c}}$ inherits from $\{1,\frac{1}{2},\frac{1}{3},\ldots,0\}^\omega$ witnesses that $\mathfrak E_{\mathrm{c}}$ is almost zero-dimensional. This is due to the proposition below and the fact that convergence in $\ell^2$ is stronger than pointwise convergence.  

\begin{up}Every closed $\varepsilon$-ball in $\mathfrak E_{\mathrm{c}}$ 
  is closed in the topological product $\{1,\frac{1}{2},\frac{1}{3},\ldots,0\}^\omega$.\end{up}

\begin{proof}Let $\mathbf x\in \mathfrak E_{\mathrm{c}}$ and $\varepsilon\in (0,\infty)$ be given. Let $$B=\{\mathbf{y}\in \mathfrak E_{\mathrm{c}}:\|\mathbf y-\mathbf x\|\leq\varepsilon\}$$ be the closed $\varepsilon$-ball around $x$ (with respect to the $\ell^2$-norm). For every integer $N$ the function $\varphi_N:\{1,\frac{1}{2},\frac{1}{3},\ldots,0\}^\omega\to [0,\infty)$ defined by $$\varphi_N(\mathbf y)=\sqrt{\sum_{n=0}^N(y_n-x_n)^2}$$ is easily seen to be continuous. Essentially, $\varphi_N$ is a restriction of the Euclidean metric on $\mathbb R^N$. So  $\varphi^{-1}_N[0,\varepsilon]$ is closed in $\{1,\frac{1}{2},\frac{1}{3},\ldots,0\}^\omega$. Hence $$B=\bigcap_{N=0}^\infty \varphi^{-1}_N[0,\varepsilon]$$ is an intersection  of closed subsets of $\{1,\frac{1}{2},\frac{1}{3},\ldots,0\}^\omega$. \end{proof}
  
The other key property of $\mathfrak E_{\mathrm{c}}$ that we will need, was proved by  Erd\H{o}s in \cite{dims}. 

\begin{up}Every non-empty clopen subset of $\mathfrak E_{\mathrm{c}}$ is unbounded in the $\ell^2$-norm.\end{up}

\section{The characterization of  $\mathbb Q\times \mathfrak E_{\mathrm{c}}$}

Following \cite{rz}, we say that a space without  isolated points is \textbf{crowded}.  

An almost zero-dimensional space $E$ belongs to the class $\sigma \mathcal E$ if there exists a witness topology $\mathcal W$ for $E$, a collection $\{E_n:n<\omega\}$ of subsets of $E$, and a basis $\mathcal  O$ of neighborhoods in $E$ such that:

\begin{enumerate}
\item $E=\bigcup \{E_n:n<\omega\}$;
\item each $E_n$ is closed in $(E,\mathcal W)$;
\item each $E_n$ is a crowded nowhere dense subset of $E_{n+1}$ ;
\item each point of $E$ has a neighborhood which contains no non-empty clopen subset of any $E_n$;
\item for every $V\in \mathcal O$ and $n<\omega$,  the set $V\cap E_n$ is compact  in $\mathcal W$.

\end{enumerate}

The main result of \cite{rz} states that $\mathbb Q\times \mathfrak E_{\mathrm{c}}$ is the only member of $\sigma \mathcal E$ (up to homeomorphism). 

\begin{up}[{\cite[Theorem 3.5]{rz}}]$\sigma \mathcal E\approx\{\mathbb Q\times \mathfrak E_{\mathrm{c}}\}$.\end{up}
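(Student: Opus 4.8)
The plan is to unpack the symbol $\approx$ into its two halves: first that $\mathbb Q\times\mathfrak E_{\mathrm c}$ is itself a member of the class $\sigma\mathcal E$, and second that \emph{every} member of $\sigma\mathcal E$ is homeomorphic to it. The first half is a construction, the second a uniqueness argument, and the uniqueness argument is where all the work lies.

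For membership I would exhibit the defining data explicitly. Fix the standard inclusion $\mathfrak E_{\mathrm c}\subseteq\{1,\tfrac12,\tfrac13,\ldots,0\}^\omega$ and let $\mathcal W$ be the usual topology on $\mathbb Q$ times the coordinatewise (product) topology on the $\mathfrak E_{\mathrm c}$ factor, restricted to $\mathbb Q\times\mathfrak E_{\mathrm c}$; this is coarser than the $\ell^2$ topology and admits a basis of $\mathcal W$-closed neighborhoods, so it is a witness topology. Enumerating $\mathbb Q=\{q_n:n<\omega\}$, I would set $E_n=\{q_0,\ldots,q_{n-1}\}\times\mathfrak E_{\mathrm c}$, a disjoint union of finitely many clopen copies of the crowded space $\mathfrak E_{\mathrm c}$; each $E_n$ is therefore crowded, is $\mathcal W$-closed (a finite subset of $\mathbb Q$ is closed), and is nowhere dense in $E_{n+1}$ because $\mathbb Q$ has no isolated points, giving (1)--(3). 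Letting $\beta$ consist of products of a $\mathbb Q$-clopen set with a fiber neighborhood $\{\mathbf x:\|\mathbf x\|<r\}$ restricted in finitely many coordinates, condition (5) holds because each such fiber set is $\mathcal W$-compact (the factor $\{1,\tfrac12,\ldots,0\}$ is compact), while the cohesion condition (4) is inherited from the cohesion of $\mathfrak E_{\mathrm c}$. This half is essentially bookkeeping once $\mathcal W$ is chosen.

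The substance is uniqueness. Given $E,F\in\sigma\mathcal E$ with witnessing data $(\mathcal W_E,\{E_n\},\beta_E)$ and $(\mathcal W_F,\{F_n\},\beta_F)$, I would build a homeomorphism $h\colon E\to F$ by a back-and-forth construction along a tree of nested partitions. At each finite stage I would partition $E$ and $F$ into finitely many nonempty cells that are clopen in the respective witness topologies, of small diameter in both the original and the witness metrics, and matched by a bijection respecting the filtrations $\{E_n\}\leftrightarrow\{F_n\}$ layer by layer. The combinatorial skeleton --- how the infinitely many layers accumulate --- is governed by Sierpi\'nski's theorem that $\mathbb Q$ is the unique countable crowded metrizable space, while the claim that each matched pair of cells is again a faithful sub-instance of $\sigma\mathcal E$, so that the recursion continues and in the limit each point of $E$ is carried to a single point of $F$, rests on the Dijkstra--van Mill characterization of $\mathfrak E_{\mathrm c}$ applied to the $\mathcal W$-compact cells. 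Condition (5) is exactly what lets the cells be taken $\mathcal W$-compact, so that shrinking $\mathcal W$-diameter pins down a unique limit and makes $h$ a well-defined continuous bijection with continuous inverse.

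The main obstacle is the subdivision step, where conditions (4) and (5) pull in opposite directions. Cohesion (4) forbids splitting off a nonempty clopen chunk of any single $E_n$, yet the back-and-forth requires repeatedly cutting $E$ into witness-clopen cells of small original diameter. I expect the crux to be the verification that, on a $\mathcal W$-compact cell, one can always find a finite $\mathcal W$-clopen partition into small-original-diameter subcells \emph{each of which still meets every relevant layer in a crowded, nowhere dense, cohesive set}. Reconciling the abundance of witness-clopen sets supplied by (5) with the rigidity imposed by (4) is the technical heart of the theorem, and it is precisely the point at which all five axioms must be used simultaneously.
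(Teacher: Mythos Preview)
The paper does not prove this proposition; it simply records it with the citation \cite[Theorem 3.5]{rz} and uses it as a black box. There is therefore no ``paper's own proof'' to compare your proposal against.

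That said, your sketch is a reasonable outline of how such a characterization theorem is typically established: a direct verification that the model space $\mathbb Q\times\mathfrak E_{\mathrm c}$ satisfies the axioms, followed by a back-and-forth argument between two arbitrary members of the class, using the witness topology to control the limits. Your identification of condition (5) as the mechanism that makes the limit well-defined, and of the tension between (4) and (5) as the delicate point in the subdivision step, is on target for this style of result. Whether your description matches the actual argument in \cite{rz} cannot be determined from the present paper.
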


\section{The proof of Theorem 1}

\noindent We will show  that $\sigma \mathfrak E_{\mathrm{c}}$ belongs to the class $\sigma \mathcal E$.

Endow $\{1,\frac{1}{2},\frac{1}{3},\ldots,0\}^\omega$ with the (zero-dimensional) product topology. Let $\mathcal W$ be the  subspace topology on the set $\mathfrak E_{\mathrm{c}}\subset \{1,\frac{1}{2},\frac{1}{3},\ldots,0\}^\omega$. Per  Section 2, $\mathcal W$ witnesses the almost zero-dimensionality of $\mathfrak E_{\mathrm{c}}$. Let $\sigma \mathcal W$ be  the natural topology on $\sigma \mathfrak E_{\mathrm{c}}$ that is generated by $\mathcal W$; equivalently $\sigma \mathcal W$ is the topology that $\sigma \mathfrak E_{\mathrm{c}}$ inherits from  $\big(\textstyle{\{1,\frac{1}{2},\frac{1}{3},\ldots,0\}^\omega}\big)^\omega.$  It is easily checked that  $\sigma \mathcal W$ witnesses the almost zero-dimensionality  of $\sigma \mathfrak E_{\mathrm{c}}$.
 
We will now assume that the point $p$ used to define $\sigma \mathfrak E_{\mathrm{c}}$ is the point $\mathbf 0\in \mathfrak E_{\mathrm{c}}$.   For every $n<\omega$ let  $K_n=\{\mathbf x\in\mathfrak E_{\mathrm{c}}:\|\mathbf x\|\leq n+1\}$, and  define $$E_n=\mathfrak E_{\mathrm{c}} \times \underbrace{K_{n}\times \ldots \times K_n}_\text{$n$ times}\times \{\mathbf 0\}^{\omega\setminus(n+1)}.$$ Clearly, $\sigma \mathfrak E_{\mathrm{c}}=\bigcup \{E_n:n<\omega\}.$ By Proposition 2, $K_n$ is closed in $\mathcal W$. Therefore $E_n$ is closed in $\sigma\mathcal W$.
Observe also that $E_n$ is a crowded nowhere dense subset of $E_{n+1}$.

Thus far we have established (1) through (3).   Toward proving (4), let $\mathbf y\in \sigma \mathfrak E_{\mathrm{c}}$. Let $U$ be a bounded open set in $ \mathfrak E_{\mathrm{c}}$ with $y_0\in U$.    Then $\{\mathbf z\in\sigma \mathfrak E_{\mathrm{c}}: z_0\in U\}$ is a neighborhood of  $\mathbf y$ in $\sigma \mathfrak E_{\mathrm{c}}$ . Arguing as in \cite[Remark 5.2]{erd}, we show that  $\{\mathbf z\in\sigma \mathfrak E_{\mathrm{c}}: z_0\in U\}$  contains no non-empty clopen subset of any $E_n$. First note that $U$ does not contain any non-empty clopen subset of $\mathfrak E_{\mathrm{c}}$ by Proposition 3. Now  suppose for a contradiction that $C$ is  non-empty and clopen in $E_n$ and $\pi_0[C]\subset U$. Let $\mathbf{z}\in C$. Then $$C'=C\cap \Big(\mathfrak E_{\mathrm{c}}\times \prod_{n\geq 1}\{z_n\}\Big)$$ is  non-empty and clopen in the space $\mathfrak E_{\mathrm{c}}\times \{z_1\}\times \{z_2\}\times \ldots$ where the  projection map $\pi_0$ is both open and closed.  But then  $\pi_0[C']\subset U$ is a non-empty  clopen subset of $\mathfrak E_{\mathrm{c}}$,  a contradiction. Thus (4) holds.

Finally, for (5) let $\beta$ be a  basis  for $\mathfrak E_{\mathrm{c}}$  consisting of neighborhoods which are compact in $\mathcal W$, as provided by Proposition 2.   Let $\mathcal O$ be the collection of  all sets $$V=(B_0\times \ldots \times B_m\times \mathfrak E_{\mathrm{c}}^{\omega\setminus(m+1)})\cap \sigma \mathfrak E_{\mathrm{c}} $$
 where $m\geq 0$ and $B_i\in \mathscr \beta$ for each $i\leq m$. Clearly $\mathcal O$ is  a neighborhood basis for $\sigma \mathfrak E_{\mathrm{c}}$.  For every $V\in \mathcal O$ and $n<\omega$, the intersection $V\cap E_n$ is a product with factors of the kind  $B_0$, $B_i\cap K_n$, $K_n$,  $\{\mathbf 0\}$ or $\varnothing$. Recall from Proposition 2 that each $K_n$ is compact in $\mathcal W$. Thus  $V\cap E_n$ is a  product of  $\mathcal W$-compact sets. So $V\cap E_n$ is compact in $\sigma\mathcal W$. 

We conclude that $\sigma \mathfrak E_{\mathrm{c}}\in \sigma \mathcal E$. By Proposition 4, $\sigma \mathfrak E_{\mathrm{c}} \simeq \mathbb Q\times \mathfrak E_{\mathrm{c}}$.  \hfill$\blacksquare$

\end{document}